\newtheorem{theorem}{Theorem}[section]
\newtheorem{lemma}[theorem]{Lemma}
\theoremstyle{definition}
\theoremstyle{remark}
\newtheorem{remark}[theorem]{Remark}
\numberwithin{equation}{section}
\newcommand{\g}{\geqslant}
\newcommand{\RR}{\mathbb{R}}
\newcommand{\CC}{\mathbb{C}}
\newcommand{\NN}{\mathbb{N}}
\newcommand{\p}{\partial}
\newcommand{\les}{\leqslant}
\newcommand{\lesa}{\lesssim}
\newcommand{\mc}[1]{\mathcal{#1}}
\newcommand{\lr}[1]{ \langle #1 \rangle}
\DeclareSymbolFont{bbold}{U}{bbold}{m}{n}
\DeclareSymbolFontAlphabet{\mathbbold}{bbold}
\begin{document}

\title{Long Range Scattering for the Cubic Dirac equation on $\RR^{1+1}$}%

\author[T.~Candy]{Timothy Candy}%
\address[T.~Candy]{Universit\"at Bielefeld, Fakult\"at f\"ur Mathematik,
  Postfach 100131, 33501 Bielefeld, Germany}%
\email{tcandy@math.uni-bielefeld.de}%

\author[H.~Lindblad]{Hans Lindblad}
\address[H.~Lindblad]{Department of Mathematics, Johns Hopkins University, 404 Krieger Hall, 3400 N. Charles Street, Baltimore, Maryland 21218} 
\email{lindblad@math.jhu.edu}

\thanks{T.C. acknowledges support from the German Research
  Foundation via Collaborative Research Center 701. H.L. is partially supported by NSF grant DMS--1237212. }

\subjclass[2010]{Primary: 35Q41, 35B40}

\keywords{Cubic Dirac equation, long-range scattering, Thirring Model}

\begin{abstract}
We show that the cubic Dirac equation, also known as the Thirring model, scatters at infinity to a linear solution modulo a phase correction.
\end{abstract}
\maketitle
\section{Introduction}

We consider the cubic Dirac equation (also known as the Thirring model)
    \begin{equation}\begin{split}\label{eqn - thirring model}
      (\p_t + \p_x) u &= i v + i |v|^2 u \\
      (\p_t - \p_x) v &= i u + i |u|^2 v\\
    \end{split}\end{equation}
with data $u(1)=f$, $v(1)=g$ where $u, v : \RR^{1+1} \rightarrow \CC$. This model was introduced by Thirring in \cite{Thirring1958} and describes the self interaction of a Dirac field.

It is known that solutions exist globally in time, provided that the data $f, g \in L^2$ \cite{Candy2010}. With regards to regularity, this is sharp in the sense that the $L^2_x$ norm is scale invariant (at least for zero mass). Earlier local and global well-posedness results can be found in \cite{Selberg2010b}. However the question of asymptotic behaviour is largely unknown. Some recent work in this direction has shown \emph{orbital stability} of the solitons \cite{Contreras2016, Pelinovsky2014}, but this leaves open the question of pointwise behaviour. In higher dimensions, $n>1$, the Thirring model is globally well-posed for small data and scatters to a linear solution in the scale invariant Sobolev space \cite{Bejenaru2014a, Bejenaru2016, Bournaveas2015}. Thus in the small data regime, the asymptotic behaviour is understood provided $n \not =1$.

 In the current article, our goal is present a first step towards understanding the pointwise asymptotic stability of the Dirac equation (\ref{eqn - thirring model}). More precisely, we adapt the arguments of Lindblad-Soffer \cite{Lindblad2005, Lindblad2005a, Lindblad2015} (see also \cite{Sunagawa2005, Sunagawa2006}), and show via energy estimates, together with an ODE argument, that the cubic nonlinearity causes an additional phase correction in the scattering behaviour. Our main result is as follows.

\begin{theorem}\label{thm - main}
Let $N \g 1$. There exists $\epsilon>0$ such that if the data satisfies
        $$ \big\| \lr{x}^{3+\frac{N}{2}}  f \big\|_{H^{N+4}} +\big\| \lr{x}^{3+\frac{N}{2}}   g \big\|_{H^{N+4}} \les \epsilon,$$
then in the exterior region $1\les t \les \lr{x}$ we have
    $$  |u(t, x)| + |v(t, x)| \lesa \lr{x}^{-\frac{N}{2}} \Big( \big\| \lr{x}^{3+\frac{N}{2}}  f \big\|_{H^{N+4}} +\big\| \lr{x}^{3+\frac{N}{2}}   g \big\|_{H^{N+4}} \Big).$$
On the other hand, when $t \g \lr{x}$, there exists bounded continuous functions $f_\pm$, such that
\begin{align*} u(t, x) &=  \frac{1}{\sqrt{t-x}} \Big( e^{  i \rho +  2i |f_+(\frac{x}{t})|^2 \ln(\rho)} f_+\big(\tfrac{x}{t}\big) + e^{  -i \rho +  2 i |f_-(\frac{x}{t})|^2 \ln(\rho)} f_-\big( \tfrac{x}{t} \big)\Big) + \mathcal{O}\Big( \frac{\rho^{-\frac{1}{2}}}{\sqrt{t-x}}\Big), \\
       v(t, x) &=    \frac{1}{\sqrt{t+x}} \Big( e^{  i \rho +  2i |f_+(\frac{x}{t})|^2 \ln(\rho)} f_+\big(\tfrac{x}{t}\big) - e^{  -i \rho +  2i  |f_-(\frac{x}{t})|^2 \ln(\rho)} f_-\big( \tfrac{x}{t} \big)\Big) + \mathcal{O}\Big( \frac{\rho^{-\frac{1}{2}}}{\sqrt{t+x}}\Big)\end{align*}
as $\rho = \sqrt{t^2 - x^2} \rightarrow \infty$.
\end{theorem}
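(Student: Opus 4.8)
The plan is to separate the light cone into the almost-spacelike exterior wedge $\{1\leq t\leq\langle x\rangle\}$ and the interior $\{t\geq\langle x\rangle\}$, the latter coinciding exactly with $\{\rho\geq1\}$, and to run the vector-field/energy method in each, using the Lorentz boost $Z=x\partial_t+t\partial_x$, which transforms the system in a controlled way ($[Z,\partial_t\pm\partial_x]=\mp(\partial_t\pm\partial_x)$). First I record that for $\epsilon$ small the solution exists globally and persists in the weighted Sobolev norm of the hypothesis, by the $L^2$ global theory of \cite{Candy2010} together with a routine commutator bootstrap. In the exterior wedge one has $\rho^2=t^2-x^2\leq\langle x\rangle^2-x^2=1$ (or $t<|x|$), so the asymptotic formula is vacuous there and only the pointwise bound is needed; since the linear part is the one-dimensional Klein--Gordon system of mass one, whose solution decays at rate $\langle x\rangle^{-N/2}$ in this almost-spacelike region once the data carry enough weight, and the cubic corrections are smaller still, this follows from weighted energy estimates adapted to the spacelike geometry (the hyperbolas $\{x^2-t^2=\mathrm{const}\}$ and the near-cone region), commuted with $Z$ up to order $\simeq N$; the linear mass terms cancel in these estimates and the large data weight $\langle x\rangle^{3+N/2}$ absorbs the polynomial losses. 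The output I carry forward is a bound for $u,v$ and their $Z$-derivatives on $\Sigma_1=\{t=\langle x\rangle\}=\{\rho=1\}$, uniform in $x$, which (thanks to $N\geq1$) becomes uniformly small in the rescaled variables introduced below, even near the light cone.

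For the interior I pass to hyperbolic coordinates $t=\rho\cosh y$, $x=\rho\sinh y$, in which $Z=\partial_y$ and $\partial_t\pm\partial_x=e^{\mp y}(\partial_\rho\pm\rho^{-1}\partial_y)$, and I rescale by $U=\sqrt{t-x}\,u=\rho^{1/2}e^{-y/2}u$, $V=\sqrt{t+x}\,v=\rho^{1/2}e^{y/2}v$. A direct computation collapses the system to the weight-free form $(\partial_\rho+\rho^{-1}\partial_y)U=iV+i\rho^{-1}|V|^2U$, $(\partial_\rho-\rho^{-1}\partial_y)V=iU+i\rho^{-1}|U|^2V$. Diagonalising the linear part by $a_\pm=\tfrac12(U\pm V)$ and removing the oscillation by the exact substitution $a_\pm=e^{\pm i\rho}b_\pm$, so that $U=e^{i\rho}b_++e^{-i\rho}b_-$ and $V=e^{i\rho}b_+-e^{-i\rho}b_-$, reduces the system to
\[
\partial_\rho b_\pm=2i\rho^{-1}|b_\pm|^2b_\pm+\rho^{-1}e^{\mp2i\rho}\,\partial_y b_\mp+\rho^{-1}e^{\mp4i\rho}\,\overline{b_\pm}\,b_\mp^2
\]
up to fixed constants. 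The key structural fact is that the ``cross'' interactions $|b_\mp|^2 b_\pm$ cancel identically --- a null structure expressing that $u$ moves right and $v$ moves left --- so the resonant part of the nonlinearity is a pure self-interaction, which is the source of the logarithmic phase, while every remaining cubic and transport error carries a genuine oscillation $e^{\pm2i\rho}$ or $e^{\pm4i\rho}$.

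I then run energy estimates on the hyperboloids $\Sigma_\rho$. Since $Z=\partial_y$ commutes exactly with $\partial_\rho\pm\rho^{-1}\partial_y$, the equations for $(Z^kU,Z^kV)$ keep the same form, and the $L^2(dy)$ estimate on $\Sigma_\rho$ uses three cancellations: $\rho^{-1}\partial_y$ is skew, the linear mass terms cancel between the two equations (as for the conserved charge), and the top-order cubic term $i\rho^{-1}|V|^2|Z^kU|^2$ is purely imaginary; after placing lower-order factors in $L^\infty$ via the one-dimensional Sobolev inequality $\|w\|_{L^\infty_y}\lesssim\|w\|_{L^2_y}^{1/2}\|\partial_y w\|_{L^2_y}^{1/2}$, this yields $\tfrac{d}{d\rho}E_k\lesssim\rho^{-1}(\|U\|_{L^\infty_y}+\|V\|_{L^\infty_y})^2E_k$ with $E_k=\sum_{j\leq k}(\|Z^jU\|_{L^2_y}^2+\|Z^jV\|_{L^2_y}^2)$, hence $E_k(\rho)\lesssim\epsilon^2\rho^{C\epsilon^2}$ for $1\lesssim k\lesssim N+4$ under the bootstrap $\|U\|_{L^\infty_y}+\|V\|_{L^\infty_y}\lesssim\epsilon$. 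Freezing the ray $x/t=\tanh y$ and using that the resonant term is imaginary, $\partial_\rho|b_\pm|^2=2\operatorname{Re}(\overline{b_\pm}\cdot(\text{error}))$ with oscillatory error, so one integration by parts in $\rho$ (using the $Z$-bounds to control $\partial_y b_\pm$ and $\partial_\rho\partial_y b_\pm$) shows $|b_\pm(\rho,y)|$ is Cauchy as $\rho\to\infty$ at rate $O(\rho^{-1/2})$, which also closes the $L^\infty$ bootstrap with a genuinely non-growing bound. Writing $|f_\pm(\tanh y)|$ for the limit, $\partial_\rho b_\pm=2i\rho^{-1}|f_\pm|^2b_\pm+O(\rho^{-3/2})$ integrates to $b_\pm(\rho,y)=e^{2i|f_\pm(\tanh y)|^2\ln\rho}\big(f_\pm(\tanh y)+O(\rho^{-1/2})\big)$ with $f_\pm$ bounded and continuous in $x/t$ (continuity from the $Z$-bounds), and substituting back through $u=(t-x)^{-1/2}(e^{i\rho}b_++e^{-i\rho}b_-)$ and $v=(t+x)^{-1/2}(e^{i\rho}b_+-e^{-i\rho}b_-)$ produces the two displayed expansions.

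The main obstacle I anticipate is the mutual dependence of the energy estimates and the ODE analysis: the energy bound requires the pointwise smallness $\|U\|_{L^\infty}\lesssim\epsilon$, but the Sobolev inequality only returns it with the slow growth $\rho^{C\epsilon^2}$, so the honest $L^\infty$ bound must instead be propagated through the ODE --- which works precisely because the resonant nonlinearity preserves $|b_\pm|$ and every error term is oscillatory, so that after one integration by parts (and, for the cubic errors, occasionally a second) all errors become integrable in $\rho$ even against the mildly growing $E_k$. A secondary difficulty is the light-cone asymptotics $y\to\pm\infty$, where the weights $e^{\pm y/2}$ defining $U,V$ must be controlled against the decay supplied by the exterior step; this is exactly where the strong data weight $\langle x\rangle^{3+N/2}$ and the restriction $N\geq1$ enter.
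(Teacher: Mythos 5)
Your proposal is correct and follows essentially the same route as the paper: Klainerman-type weighted energy estimates in the almost-spacelike exterior wedge, then hyperbolic coordinates with the rescaling $U=\sqrt{t-x}\,u$, $V=\sqrt{t+x}\,v$, diagonalization to $b_\pm=\tfrac12 e^{\mp i\rho}(U\pm V)$ (the paper's $\phi_\pm$ up to a factor of $2$), the observation that the resonant cross terms $|b_\mp|^2b_\pm$ are absent, hyperboloid energy estimates giving $\rho^{C\epsilon^2}$ growth, and a normal-form/ODE argument in $\rho$ that both closes the $L^\infty$ bootstrap and produces the logarithmic phase $2i|f_\pm|^2\ln\rho$. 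The only differences are cosmetic (the paper organizes the exterior estimate via an explicit hierarchy of weights $w_j=(t+|x|)^{N-j}(|x|-t+1)^j$ and writes the non-resonant terms as $\partial_\rho S_\pm+R_\pm$, which is exactly your integration by parts in $\rho$).
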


We have made no attempt to optimise the decay or regularity assumptions on the data, and it is clear that the proof given below can be improved to somewhat sharpen the assumptions on the data. Alternatively, at a cost of complicating the proof, it should also be possible to obtain weaker decay conditions on the data by following the argument in the recent work of Stingo \cite{Stingo2015}. It is also worth noting that similar results should hold for (\ref{eqn - thirring model}) with more general cubic nonlinearities, however we do not consider this problem here. \\

The proof of Theorem \ref{thm - main} in the exterior region only exploits the additional decay of the Klein-Gordon equation when $t\les \lr{x}$ by using an argument of Klainerman \cite{Klainerman1993a}. In particular, the argument used here can also be used to remove the compact support assumptions from related works on the cubic Klein-Gordon equation \cite{Lindblad2005, Sunagawa2005, Delort2001}. However it is important to note that the work of Stingo \cite{Stingo2015} gives stronger results for the Klein-Gordon equation (in that it requires less decay on the data), than the weighted energy estimates approach used here.

In the interior region,  as in \cite{Lindblad2005, Lindblad2005a}, the proof of Theorem \ref{thm - main} proceeds by extracting the expected asymptotic behaviour, and use energy estimates on the hyperboloids $\{(t, x) | t^2 - x^2 = \rho^2\}$, together with an ODE formulation which reveals the precise asymptotic correction to the linear flow. This argument relies on the careful consideration of the linear Dirac flow, together with the precise structure of the nonlinearity in (\ref{eqn - thirring model}).

It is worth noting that, if we square the system (\ref{eqn - thirring model}), we obtain a nonlinear Klein-Gordon equation of the (schematic) form \begin{equation}\label{eqn - dirac as KG eqn}
  \Box \phi + \phi = \phi^3 + \phi^2 \p \phi
\end{equation}
thus it is tempting to try and deduce the asymptotic behaviour of $(u, v)$ from the corresponding result on the cubic Klein-Gordon equation given in \cite{Delort2001,  Lindblad2005, Stingo2015, Sunagawa2005}. However, the nonlinear terms in (\ref{eqn - dirac as KG eqn}) \emph{do not} satisfy the requirements needed to apply the previous results, and thus we have to work harder to obtain the asymptotic behaviour given in Theorem \ref{thm - main}.

\section{Exterior Region}

Given $T\g 1$, we consider the domain
    $$\mc{D}_T = \{ (t, x) \in \RR^{1+1} \,|\, \lr{x} \g t, \,\, 1\les t \les T\,\}$$
with boundary $S_T = \{  (T, x) \in \RR^{1+1} \,| \, \lr{x} \g T \}\cup \{ (\lr{x}, x) \,|\, \lr{x} \les T \,\} $. Define
    $$ E_{ext, T}(\phi, f)  =   \bigg( \int_{S_T} n^\alpha \, Q_{\alpha 0}[ \phi] \, f \, dx \bigg)^\frac{1}{2} $$
with $n^\alpha  \p_\alpha =  \p_t$ on $\{ \lr{x} \g T, t=T\}$,  $ n^\alpha \p_\alpha = \p_t + \frac{x}{\lr{x}} \p_x$ on $ \{ \lr{x} = t, 1\les t \les T\}$, and $Q_{\alpha \beta}$ is the Klein-Gordon energy momentum tensor
        $$ Q_{\alpha \beta} = \Re\Big( \p_\alpha \phi^\dagger \p_\alpha \phi  - \frac{1}{2} m_{\alpha \beta} \big( \p^\mu \phi^\dagger \p_\mu \phi - |\phi|^2 \big) \Big)$$
with the metric $m = \text{diag}(1, -1)$ and $\p_0 = \p_t$, $\p_1 = \p_x$. Note that
    $$ n^\alpha \, Q_{\alpha 0} = \begin{cases}
       \frac{1}{2} \big( |\p_t \phi|^2 + |\p_x \phi|^2 + |\phi|^2\big) \qquad &\lr{x} > T \text{ and } t=T,  \\
       \frac{1}{2} \big( |\p_t \phi|^2 + |\p_x \phi|^2 + |\phi|^2\big) + \frac{x}{t} \Re\big( (\p_t \phi)^\dagger \p_x \phi\big) &\lr{x} = t \text{ and } t<T
    \end{cases}
    $$
and hence $n^\alpha \,Q_{\alpha 0}  \g 0$. In particular $E_{ext, T}$ is well defined for any positive weight $f \g0$. Our goal is to prove the following weighted energy estimate (cf. \cite[Theorem 3]{Klainerman1993a}).

\begin{lemma}[Exterior Energy Estimates]\label{lem - ext energy estimate}
Let $1\les T < \infty$, $N \in \NN$ and for $0\les j \les N$ define the weights
    $$ w_j = ( t+ |x|)^{N-j} ( |x|-t+1)^j.$$
Then we have
    $$ \sum_{|I| \les N} E_{ext, T}(\p^I \phi, w_{|I|}) \lesa \sum_{|I| \les N} E_{ext, 1}(\p^I \phi, w_{|I|}) +  \sum_{|I| \les N} \int_1^T \bigg( \int_{ \lr{x} \g t} \big| \big( \Box +1\big) \p^I \phi \big|^2 w_{|I|} dx \bigg)^\frac{1}{2} dt.$$
\end{lemma}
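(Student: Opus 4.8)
The plan is to prove this weighted energy estimate by the standard multiplier/divergence-theorem method, but applied to weighted versions of the Klein-Gordon energy-momentum tensor, and then to close an induction on the derivative count $|I|$. First I would fix a multi-index $I$ with $|I| \les N$, write $\psi = \p^I \phi$, and consider the vector field $P^\alpha = w_{|I|} \, Q^{\alpha 0}[\psi]$ (raising indices with $m$), so that the divergence $\p_\alpha P^\alpha$ produces two kinds of terms: the "good" source term $w_{|I|} \, Q^{\alpha 0}[\psi] \, \p_\alpha(\text{nothing})$ — more precisely $w_{|I|} \Re\big( \overline{(\Box+1)\psi} \, \p_t \psi\big)$ coming from $\p_\alpha Q^{\alpha 0} = \Re\big(\overline{(\Box+1)\psi}\,\p_t\psi\big)$ — and the "commutator" term $(\p_\alpha w_{|I|}) Q^{\alpha 0}[\psi]$ coming from differentiating the weight. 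Integrating over the truncated cone region $\mc{D}_T$ and applying the divergence theorem converts $\int_{\mc D_T} \p_\alpha P^\alpha$ into the boundary integrals $E_{ext,T}(\psi, w_{|I|})^2 - E_{ext,1}(\psi, w_{|I|})^2$ (the two $t=\mathrm{const}$ caps) plus the integral over the lateral null boundary $\{\lr{x}=t\}$, which by the computation of $n^\alpha Q_{\alpha 0}$ recorded just before the lemma is manifestly nonnegative and can therefore be discarded.

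The key point — and where the specific algebraic form of the weights $w_j = (t+|x|)^{N-j}(|x|-t+1)^j$ matters — is the sign and size of the weight-derivative term $(\p_\alpha w_{|I|}) Q^{\alpha 0}[\psi]$. Writing $w_{|I|} = w_+^{\,N-j} w_-^{\,j}$ with $w_+ = t+|x|$, $w_- = |x|-t+1$, and $j = |I|$, one computes $\p_t w_{|I|} = (N-j) w_+^{N-j-1} w_-^j - j\, w_+^{N-j} w_-^{j-1}$ and $\p_x w_{|I|} = \sgn(x)\big[(N-j) w_+^{N-j-1} w_-^j + j\, w_+^{N-j} w_-^{j-1}\big]$ in the region $\lr x > t$. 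The essential observation is that $Q^{00} + \sgn(x) Q^{01} = \tfrac12|(\p_t + \sgn(x)\p_x)\psi|^2 + \tfrac12|\psi|^2 \g 0$ and likewise with the opposite sign, so that the combination $(\p_\alpha w_{|I|}) Q^{\alpha 0}$ decomposes into the sum of $(N-j) w_+^{N-j-1} w_-^j\,[Q^{00} - \sgn(x) Q^{01}]$ — note the $\p_x$ term enters $Q^{\alpha 0}$ as $-Q^{01}$ after raising — which is a nonnegative quantity times a nonnegative weight, and $j\, w_+^{N-j} w_-^{j-1}\,[\text{the other sign combination}]$, again nonnegative. Thus in the range $\lr x \g t$ the full weight-derivative term has a favorable sign and can simply be dropped; no commutator remains to be controlled and no induction on $j$ is actually needed for the weights themselves — the family is chosen precisely so that each $w_j$ is self-contained. (The sum over $|I|\les N$ is then just a sum of the individual estimates.)

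After discarding the two nonnegative terms (lateral boundary and weight-derivative), the divergence identity reads, for each $I$,
\[
E_{ext,T}(\p^I\phi, w_{|I|})^2 \les E_{ext,1}(\p^I\phi, w_{|I|})^2 + \int_{\mc D_T} w_{|I|} \, \big| \Re\big( \overline{(\Box+1)\p^I\phi}\;\p_t \p^I\phi\big)\big| \, dt\, dx.
\]
Bounding $|\p_t \p^I\phi| \les (2\, n^\alpha Q_{\alpha 0}[\p^I\phi])^{1/2}$ pointwise (valid since $n^\alpha Q_{\alpha 0}$ controls $|\p_t\phi|^2$ with a constant), Cauchy–Schwarz in $x$ on each time slice gives
\[
\int_{\lr x \g t} w_{|I|}\,\big|\overline{(\Box+1)\p^I\phi}\,\p_t\p^I\phi\big|\,dx \lesa \Big(\int_{\lr x \g t} \big|(\Box+1)\p^I\phi\big|^2 w_{|I|}\,dx\Big)^{1/2} E_{ext,t}(\p^I\phi, w_{|I|}),
\]
so that, abbreviating $\mc{E}(t) = \sum_{|I|\les N} E_{ext,t}(\p^I\phi, w_{|I|})$ and $\mc{F}(t) = \sum_{|I|\les N}\big(\int_{\lr x\g t}|(\Box+1)\p^I\phi|^2 w_{|I|}\,dx\big)^{1/2}$, one obtains $\mc{E}(T)^2 \lesa \mc{E}(1)^2 + \int_1^T \mc{F}(t)\, \mc{E}(t)\, dt$. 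The conclusion follows from a Grönwall-type argument (the standard lemma that $h(T)^2 \les h(1)^2 + \int_1^T k\, h$ implies $h(T) \lesa h(1) + \int_1^T k$, applied to $h = \mc E$, $k = \mc F$, after first noting $\mc E$ may be taken continuous in $T$ up to a harmless regularization).

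The main obstacle I anticipate is not any single estimate but the bookkeeping in the weight-derivative computation: one must verify carefully that, with the indices raised by $m = \mathrm{diag}(1,-1)$ and in the precise region $\lr x \g t$ (where also $w_- = |x| - t + 1 \g 1 > 0$, so there is no sign ambiguity and no boundary-of-region subtlety at $|x|=t$ beyond the lateral term already handled), every term produced by $\p_\alpha w_{|I|}$ pairs a nonnegative coefficient with one of the two null-derivative square combinations $\tfrac12|(\p_t\pm\p_x)\psi|^2 + \tfrac12|\psi|^2$. A secondary technical point is justifying the divergence theorem and the Grönwall step at the stated regularity; this is routine by approximating $\phi$ by smooth functions and by a continuity/bootstrap argument in $T$, and I would only remark on it rather than belabor it.
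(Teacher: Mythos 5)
Your overall architecture (divergence theorem on $\mc{D}_T$, discard the nonnegative lateral flux, Cauchy--Schwarz on the source term, Gr\"onwall) matches the paper, but there is a genuine error at the heart of the argument: the claim that for each fixed $j=|I|$ the weight-derivative term $(\p_\alpha w_j)\,Q^{\alpha 0}[\psi]$ has a single favorable sign and ``can simply be dropped'' is false, and with it the assertion that ``no induction on $j$ is needed.'' Writing $e_\pm = \p_t \pm \tfrac{x}{|x|}\p_x$, the correct decomposition is
$$ Q_{\alpha 0}[\psi]\,\p^\alpha w_j \;=\; \tfrac14\, e_+(w_j)\big(|e_-\psi|^2+|\psi|^2\big) \;+\; \tfrac14\, e_-(w_j)\big(|e_+\psi|^2+|\psi|^2\big), $$
with $e_+(w_j) = 2(N-j)(t+|x|)^{N-j-1}(|x|-t+1)^{j} \g 0$ and $e_-(w_j) = -2j\,(t+|x|)^{N-j}(|x|-t+1)^{j-1}\les 0$. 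Your own computation of $\p_t w$ and $\p_x w$ gives exactly this, i.e.\ one \emph{nonnegative} and one \emph{nonpositive} piece (not two of the same sign, as you state). Since this term sits on the right-hand side of the identity $E_{ext,T}^2 = E_{ext,1}^2 + \int_{\mc D_T} Q_{\alpha0}\p^\alpha w_j + \int_{\mc D_T}(\p^\alpha Q_{\alpha 0})w_j$, only nonpositive contributions may be discarded; the piece $e_+(w_j)(|e_-\psi|^2+|\psi|^2)\g 0$ drives energy growth and cannot be thrown away for any $j<N$ (only $w_N$ satisfies $e_+(w_N)=0$). So the estimate does not close level by level.

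The paper's proof exists precisely to handle this bad term, and the mechanism is the cross-level cancellation you dismiss: one bounds $|e_-(\p^I\phi)|^2+|\p^I\phi|^2 \les 2\sum_{|J|\les |I|+1}|\p^J\phi|^2$ (costing one derivative), uses the algebraic identity $e_+(w_{j-1}) = -\tfrac{N-j+1}{j}\,e_-(w_j)$, and chooses constants $c_j = \tfrac{2(N-j+1)}{j}c_{j-1}$ so that in the sum $\sum_j c_j \sum_{|I|\les j} Q_{\alpha 0}[\p^I\phi]\p^\alpha w_j$ the unfavorable term at level $j-1$ is absorbed by the favorable term at level $j$. This is exactly why the weight must degrade with the derivative count and why the lemma is stated for the full sum over $|I|\les N$ rather than for each $I$ separately. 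To repair your proof you need to reinstate this hierarchy; the rest of your argument (source-term estimate and the Gr\"onwall step $h(T)^2 \les h(1)^2 + \int_1^T k\,h \Rightarrow h(T)\lesa h(1)+\int_1^T k$) then goes through as in the paper.
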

\begin{proof}
We follow the argument of Klainerman \cite{Klainerman1993a}. An application of the divergence theorem gives for every $T\g 1$
    $$ \big( E_{ext, T}(\phi, f) \big)^2 = \big( E_{ext, 1}(\phi, f) \big)^2 + \int_{\mc{D}_T} Q_{\alpha 0}[\phi] \p^\alpha f dx dt + \int_{\mc{D}_T} \p^\alpha Q_{\alpha 0}[\phi] f dx dt. $$
Since $\p^\alpha Q_{\alpha 0} = \Re[ \p_t\phi^\dagger ( \Box \phi  + \phi)]$, the last integral can be estimated by
         $$ \int_{\mc{D}_T} \p^\alpha Q_{\alpha 0}[\phi] f dx dt = \int_1^T \int_{\lr{x} \g t}\Re[ \p_t\phi^\dagger ( \Box \phi  + \phi)] f dx dt \lesa \int_1^T E_{ext, t}(\phi, f) \Big( \int_{\lr{x} \g t} |\Box \phi + \phi|^2  f dx \Big)^\frac{1}{2} dt. $$
Consequently, the lemma will follow provided we can show that there exists constants $ c_j >0$ (depending only on $N$) such that
    $$ \sum_{0\les j \les N} c_{j} \sum_{|I| \les j}  Q_{\alpha 0}[\p^I \phi] \p^\alpha w_{j} \les 0.$$
To this end, we define the vector fields $e_\pm = \p_t \pm  \frac{x}{|x|} \p_x$ and observe that a computation gives the identity
    $$ Q_{\alpha 0}[\phi] \p^\alpha f = \frac{1}{4}\Big( e_+(f) \big( |e_-(\phi)|^2 + |\phi|^2 \big) + e_-(f) \big( |e_-(\phi)|^2 + |\phi|^2 \big) \Big). $$
Moreover, we can check that the weights $w_j$ satisfy $ e_+(w_N) = e_-(w_0) = 0$ and
    $$e_+( w_{j-1}) = 2 (N-j+1) \big( t+|x|\big)^{N-j} \big( |x|-t+1\big)^j =  - \frac{N-j+1}{j} e_-(w_{j})  \qquad 1\les j \les N.  $$
In particular we have $e_+(w_j) \g 0$ and $e_-(w_j) \les 0$ for every $0\les j \les N$. Therefore, since $ \sum_{|I| \les j} \big( |e_-(\p^I \phi)|^2 + |\p^I\phi|^2 \big) \les 2 \sum_{|I| \les j +1} |\p^I \phi|^2$, we deduce that
    \begin{align*}
      4\sum_{0\les j \les N} \sum_{|I| \les j} c_{j}  Q_{\alpha 0}[\p^I \phi] \p^\alpha w_{j} &=  \sum_{0\les j \les N} c_{j} \sum_{|I| \les j} e_+(f) \big( |e_-(\p^I \phi)|^2 + |\p^I \phi|^2 \big) + e_-(f) \big( |e_-(\p^I \phi)|^2 + |\p^I \phi|^2 \big) \\
      &\les \sum_{0\les j \les N} c_{j} \Big( 2 e_+(w_j) \sum_{|I| \les j+1} |\p^I \phi|^2 +  e_-(w_j) \sum_{ |I| \les j} |\p^I \phi|^2 \Big)\\
      &= \sum_{1\les j \les N} \big( 2c_{j-1} e_+(w_{j-1}) + c_j e_-(w_j) \big) \sum_{|I|\les j } |\p^I \phi|^2
    \end{align*}
which is less than zero provided we choose the constants $c_j$ such that
            $$ c_j = \frac{2(N-j +1)}{j} c_{j-1}, \qquad c_0 = 1.$$
\end{proof}

 If we have a function that satisfies $E_{ext, T}(\phi, w_j) < \infty$, then an application of Sobolev embedding\footnote{We use the observation that $\p_x w_j \lesa w_j$, together with the embedding $\|f\|_{L^\infty(x \g 0)} \lesa \| f \|_{H^1(x\g 0)}$ which follows from the standard inequality on $\RR$ by extending $f$ by reflection. } gives for any $\lr{x} \g T$ and $0 \les j \les N$
    $$  \big(| \phi|^2 w_{j}\big)(T, x) \lesa \big(E_{ext, T}(\phi, w_{j})\big)^2.$$
 We can use this estimate together with Lemma \ref{lem - ext energy estimate} and the formulation (\ref{eqn - dirac as KG eqn}) to deduce the exterior component of Theorem \ref{thm - main}. Let $Z = t \p_x + x \p_t$. A computation shows that
      \begin{align}
      \sum_{|I| \les N} \sum_{k=0}^3  \big|& \p^I Z^k \big( \phi^3 + \phi^2 \p \phi \big) \big|^2 w_{|I|}) \notag \\
      &\lesa \sum_{k_1, k_2, k_3=0}^3 \sum_{|I| \les N} w_{|I|} \sum_{J_1 + J_2 + J_3 = I}|\p^{J_1} Z^{k_1}\phi|^2 |\p^{J_2} Z^{k_2}\phi|^2 \big( |\p^{J_3} Z^{k_3}\phi|^2 + |\p^{J_3} \p Z^{k_3}\phi|^2\big) \notag\\
      &\lesa  \sum_{k_1, k_2, k_3=0}^3 \sum_{|I| \les N}  \sum_{J_1 + J_2 + J_3 = I}  \frac{w_{|I|}}{w_{|J_1|} w_{|J_2|} w_{|J_3|}} \notag \\
      &\qquad \qquad \times \big( E_{ext, T}( \p^{J_1}  Z^{k_1} \phi, w_{|J_1|}) E_{ext, T}(\p^{J_2} Z^{k_2} \phi, w_{J_2}) \big)^2 \big( |\p^{J_3} Z^{k_3} \phi|^2 + |\p^{J_3} \p Z^{k_3} \phi|^2\big) w_{J_3} \notag \\
      &\lesa t^{-2N} \sup_{\substack{|I|\les N \\
         0\les k \les 3}} \big(E_{ext, T}(\p^I Z^k\phi, w_{|I|}) \big)^4 \sum_{\substack{|I| \les N \\ 0 \les k \les 3}} \big(  |\p^I Z^k \phi|^2 +|\p \p^I Z^k \phi|^2\big) w_{|I|}. \label{eqn - cubic kg nonlinearity comp}
    \end{align}
 Let $(u, v)$ denote a solution to (\ref{eqn - thirring model}) and define
    $$ \mc{E}_{ext}(T) = \sum_{\substack{|I| \les N \\ 0 \les k \les 3}} \big( E_{ext, T}(\p^I Z^k u, w_{|I|}) + E_{ext, T}(\p^I Z^kv, w_{|I|}) \big).$$
 Then an application of Lemma \ref{lem - ext energy estimate}, together with the formulation (\ref{eqn - dirac as KG eqn}), the previous computation (\ref{eqn - cubic kg nonlinearity comp}), and the fact that $Z$ commutes with $\Box$,  gives
    \begin{align*}
      \mc{E}_{ext}(T) \lesa \mc{E}_{ext}(1) + \int_1^T t^{-2N} \mc{E}_{ext}(t) dt.
    \end{align*}
 Thus we obtain the following.

\begin{theorem}\label{thm - ext decay prop} Let $N \g 1$ and $(u, v)$ be a solution to (\ref{eqn - thirring model}). There exists a constant $\epsilon>0$ such that if the data satisfies
    $$ \mc{E}_{ext}(1) \les \epsilon,$$
then for every $T \g 1$ we have
    $$ \mc{E}_{ext}(T) \lesa \mc{E}_{ext}(1).$$
\end{theorem}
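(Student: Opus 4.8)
The plan is to close the nonlinear estimate by a standard continuity (bootstrap) argument and then read off the uniform bound from Gr\"onwall's inequality. Abbreviate $\mc{E} = \mc{E}_{ext}$, and let $C_0 \g 1$ be a constant depending only on $N$ which is admissible both in Lemma~\ref{lem - ext energy estimate} and in the nonlinear bound \eref{eqn - cubic kg nonlinearity comp}; set $C_* = C_0\, e^{C_0}$. First I would observe that, by the local (weighted, higher--regularity) well--posedness theory for \eref{eqn - thirring model}, the map $T \mapsto \mc{E}(T)$ is finite and continuous on a right neighbourhood of $T = 1$; this is all that is required from the local theory to initialise the continuity argument. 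Define
$$ T^* \;=\; \sup\big\{\, T \g 1 \;:\; \mc{E}(t) \les 2 C_* \epsilon \ \text{ for all } 1 \les t \les T \,\big\} . $$
Since $\mc{E}(1) \les \epsilon < 2 C_* \epsilon$ (as $C_* \g 1$) and $\mc{E}$ is continuous near $1$, we have $T^* > 1$.

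Next I would run the energy estimate on the interval $[1, T^*)$. Applying Lemma~\ref{lem - ext energy estimate} to each of $Z^k u$ and $Z^k v$, $0 \les k \les 3$ --- which is legitimate since $[Z, \Box] = 0$, so that $(\Box + 1)\p^I Z^k \phi = \p^I Z^k\big( \phi^3 + \phi^2 \p \phi \big)$ by \eref{eqn - dirac as KG eqn} --- and summing, then bounding the resulting nonlinear term by \eref{eqn - cubic kg nonlinearity comp} together with the observation that on the slices $\{t\} \times \{ \lr{x} \g t \}$ one has $n^\alpha Q_{\alpha 0}[\psi] \g \tfrac12 \big( |\p_t \psi|^2 + |\p_x \psi|^2 + |\psi|^2 \big)$ --- whence $\int_{\lr{x} \g t} \big( |\p \p^I Z^k \phi|^2 + |\p^I Z^k \phi|^2 \big) w_{|I|}\, dx \lesa \mc{E}(t)^2$ --- one arrives, exactly as indicated above the statement, at
$$ \mc{E}(T) \;\les\; C_0\, \mc{E}(1) \;+\; C_0 \int_1^T t^{-2N}\, \mc{E}(t)\, dt \qquad (1 \les T < T^*) . $$
The sole place the smallness hypothesis enters is in absorbing the quartic factor $\sup_{|I| \les N,\, 0 \les k \les 3}\big( E_{ext,T}(\p^I Z^k \phi, w_{|I|}) \big)^4 \les \mc{E}(T)^4$ coming from \eref{eqn - cubic kg nonlinearity comp} into $C_0$: under the bootstrap assumption it is $\les (2 C_* \epsilon)^4 \les 1$ as soon as $\epsilon$ is chosen small in terms of $C_0$ alone. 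Since $N \g 1$ we have $\int_1^\infty t^{-2N}\, dt = (2N - 1)^{-1} \les 1$, so Gr\"onwall's inequality gives
$$ \mc{E}(T) \;\les\; C_0\, \mc{E}(1)\, e^{C_0/(2N-1)} \;\les\; C_*\, \mc{E}(1) \;\les\; C_* \epsilon \qquad (1 \les T < T^*) . $$

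It remains to close the bootstrap. The last display shows $\mc{E}(T) \les C_* \epsilon < 2 C_* \epsilon$ on $[1, T^*)$, a strict improvement of the inequality defining $T^*$; together with the continuity of $\mc{E}$ --- which also propagates its finiteness --- this rules out $T^* < \infty$, since in that case $\mc{E}(T^*) \les C_* \epsilon$ and the bound $\mc{E} \les 2 C_* \epsilon$ would persist on an interval strictly larger than $[1, T^*)$. Hence $T^* = \infty$ and $\mc{E}(T) \les C_* \mc{E}(1)$ for every $T \g 1$, which is the assertion. There is no genuinely hard step here once Lemma~\ref{lem - ext energy estimate} and \eref{eqn - cubic kg nonlinearity comp} are in hand; the two points worth flagging are that the right--hand side of \eref{eqn - cubic kg nonlinearity comp} is superlinear in $\mc{E}$ --- sextic, in fact, after unwinding its pointwise and energy factors --- which is exactly why the hypothesis $\mc{E}_{ext}(1) \les \epsilon$ cannot be dispensed with, and that the only decay used is that of the weights $w_{|I|}$, the time weight $t^{-2N}$ being integrable precisely because $N \g 1$; no Klein--Gordon pointwise decay is invoked in this exterior region.
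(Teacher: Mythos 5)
Your argument is the one the paper intends but compresses into a single display and the words ``thus we obtain'': Lemma~\ref{lem - ext energy estimate} applied to $\p^I Z^k u$, $\p^I Z^k v$ (using $[Z,\Box]=0$ and \eref{eqn - dirac as KG eqn}), the nonlinear bound \eref{eqn - cubic kg nonlinearity comp}, absorption of the quartic energy factor via the bootstrap hypothesis, Gr\"onwall, and a continuity argument. Making the bootstrap scaffolding explicit is exactly the right way to fill in what the paper omits, and that part of your write-up is correct.

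One bookkeeping point deserves a second look; it is inherited from the paper's own display rather than introduced by you, but your proof leans on it quantitatively. The source term in Lemma~\ref{lem - ext energy estimate} is
$$ \int_1^T \Big( \int_{\lr{x} \g t} \big| (\Box+1) \p^I Z^k \phi \big|^2 w_{|I|}\, dx \Big)^{\frac12} dt, $$
i.e.\ the \emph{square root} of the quantity that \eref{eqn - cubic kg nonlinearity comp} controls. The weight ratio contributes $(t+|x|)^{-2N} \les t^{-2N}$ inside the square, hence only $t^{-N}$ after taking the square root; integrating \eref{eqn - cubic kg nonlinearity comp} in $x$ and using your observation on the flat slices therefore yields a source term $\lesa \int_1^T t^{-N} \mc{E}_{ext}(t)^3\, dt \les (2C_*\epsilon)^2 \int_1^T t^{-N} \mc{E}_{ext}(t)\, dt$, not $\int_1^T t^{-2N} \mc{E}_{ext}(t)\, dt$. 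For $N \g 2$ this is harmless and your Gr\"onwall step goes through verbatim with $(N-1)^{-1}$ in place of $(2N-1)^{-1}$. For $N=1$, however, $t^{-1}$ is not integrable, and Gr\"onwall only returns $\mc{E}_{ext}(T) \lesa \mc{E}_{ext}(1)\, T^{C\epsilon^2}$ --- slow polynomial growth, which does not close the bootstrap uniformly in $T$ and does not give the stated conclusion. So your appeal to ``$\int_1^\infty t^{-2N}\,dt = (2N-1)^{-1}$'' rests on an exponent that the quoted estimates do not actually deliver in the borderline case $N=1$; either restrict to $N \g 2$, or extract additional decay beyond what \eref{eqn - cubic kg nonlinearity comp} records before asserting the integral inequality in the form you use it.
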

It is easy to check that this theorem gives the claimed decay rate in Theorem \ref{thm - main} in the exterior region $\lr{x} \g t$.

\section{Hyperbolic Coordinates}

We now turn to the more difficult inner region $t \g \lr{x}$. As in the previous works \cite{Delort2001, Lindblad2005, Sunagawa2005}, hyperbolic coordinates play a key role. Define the coordinates
    $$ t = \rho \cosh(y), \qquad x = \rho \sinh(y)$$
and let
    $$u(t,x) =  (\rho e^{-y})^{-\frac{1}{2}} U(\rho, y), \qquad v(t, x) = ( \rho e^{y})^{-\frac{1}{2}} V(\rho, y).$$
To control the solution in the interior region, we define the energy
    $$ \mc{E}_{int}(\rho) = \sum_{0\les k \les 3} \| \p_y^k U(\rho) \|_{L^2_y} + \| \p_y^k V(\rho) \|_{L^2_y}.$$
Arguing as in \cite[Section 7.6]{Hoermander1997}, \cite{Lindblad2005a}, the point wise identity
    $$ \sum_{0\les k \les 3} |\p^k_y U|^2 + |\p^k_y V|^2 \lesa \rho \cosh(y) \Big( \sum_{0\les k \les 3} |Z^k u|^2 + |Z^k v|^2 \Big)$$
implies that
        $$ \mc{E}_{int}(1) \lesa \lim_{T \rightarrow \infty} \mc{E}_{ext}(T) \lesa \mc{E}_{ext}(1).$$
Consequently, in view of the results in the previous section, we may assume that $\mc{E}_{int}(1)$ is small. The next step is to derive the equations satisfied by $(U, V)$. To this end, we note that since
     \begin{align*} (\p_t + \p_x) u  &= e^{ - \frac{y}{2}} \rho^{-\frac{1}{2}} \Big( \p_\rho U + \frac{1}{\rho} \p_y U \Big) \end{align*}
and
     \begin{align*} (\p_t - \p_x) v  &= e^{  \frac{y}{2}} \rho^{-\frac{1}{2}} \Big( \p_\rho V - \frac{1}{\rho} \p_y V \Big)  \end{align*}
the system (\ref{eqn - thirring model}) becomes
    \begin{align*}
      \p_\rho U + \frac{1}{\rho} \p_y U &= i V + i \frac{1}{\rho} |V|^2 U \\
      \p_\rho V - \frac{1}{\rho} \p_y V &= i U + i \frac{1}{\rho} |U|^2 V.
    \end{align*}
We require another version of the equation (\ref{eqn - thirring model}) to exploit the oscillatory behaviour of the solution. Define
    $$ \phi_\pm = e^{ \mp i \rho} ( U \pm V)$$
and $\phi = (\phi_+, \phi_-)$. Observe that
    $$e^{ \pm i \rho} \p_\rho \phi_\pm =   \big(\p_\rho U - i V \big)  \pm \big( \p_\rho V - i U\big)  =  \frac{1}{\rho} \Big( i |V|^2 U  \pm i |U|^2 V - \p_y \big( U \mp V\big) \Big).$$
Consequently we see that $\phi_\pm$ satisfies
    $$ \p_\rho \phi_\pm  + e^{ \mp 2 i \rho} \frac{1}{\rho} \p_y \phi_\mp = \frac{1}{\rho} i F_\pm $$
with
    $$ F_\pm = e^{ \mp i \rho} \big( |V|^2 U \pm |U|^2 V \big).$$
To compute $F_\pm$ in terms of $\phi_\pm$, we start by observing that
$$ |V|^2 U \pm |U|^2 V = \pm \big( U \pm V\big)^\dagger UV =  \pm \frac{1}{2} \big( U \pm V\big)^\dagger \Big( (U+V)^2 - (U-V)^2 \Big) $$
which implies that
    $$ \pm 2 F_\pm =  e^{ \mp i \rho} \big( e^{\pm i \rho} \phi_\pm \big)^\dagger \Big(  e^{ 2 i \rho} \phi_+^2 - e^{ - 2 i \rho} \phi_-^2 \Big)  .$$
Rearranging this then gives
    $$  2 F_\pm = |\phi_\pm|^2 \phi_\pm - e^{ \mp 4 i \rho} \big( \phi_\pm^\dagger \phi_\mp\big) \phi_\mp. $$
This has the important implication that we may write our equation as
     \begin{align} \p_\rho \phi_\pm &= \frac{i}{2 \rho} |\phi_\pm|^2 \phi_\pm  - \frac{1}{\rho}\Big( e^{\mp 2i \rho} \p_y \phi_\mp + \frac{i}{2} e^{ \mp 4 i \rho} (\phi^\dagger_\pm \phi_\mp) \phi_\mp \Big)  \notag \\
            &= \frac{i}{2 \rho} |\phi_\pm|^2 \phi_\pm + \p_\rho S_\pm + R_\pm \label{eqn - phi pm eqn}
     \end{align}
where
    $$S_\pm = \frac{-1}{\mp i \rho} e^{ \mp i \rho } \p_y \phi_\mp + \frac{-1}{\mp 8 \rho} e^{\mp 4 i \rho} (\phi_\pm^\dagger \phi_\mp) \phi_\mp$$
and
    $$R_\pm = \frac{1}{\mp i} e^{ \mp i \rho} \p_\rho\Big( \frac{1}{\rho} \p_y \phi_\mp \Big) + \frac{1}{\mp 8} e^{ \mp i \rho} \p_\rho \Big( \frac{1}{\rho} (\phi_\pm^\dagger \phi_\mp) \phi_\mp \Big). $$
The idea being that $R_\pm$ should be integrable, and thus can be considered a remainder term. On the other hand, the $\p_\rho S_\pm$ is not (absolutely) integrable, but can be absorbed into the left hand side. The remaining non-resonant term $|\phi_\pm|^2 \phi_\pm$ cannot be handled in this manner, and thus leads to the phase correction in the asymptotic behaviour. \\

\section{Interior Region}

Define
        $$M(\rho) = \sup_y \big( |U|^2 + |V|^2 \big)^\frac{1}{2} = 2 \sup_y \big( |\phi_+|^2 + |\phi_-|^2 \big)^\frac{1}{2}.$$
Our goal is to prove the following.
\begin{lemma}
There exists $\epsilon>0$ such that, if $\mc{E}_{int}(1) \les \epsilon$, then we have the global bound
        $$ \sup_{\rho \g 1} M(\rho) \lesa \mc{E}_{int}(1).$$
\end{lemma}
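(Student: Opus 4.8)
The plan is to run a bootstrap/continuity argument on the quantity $M(\rho)$ together with the interior energy $\mc{E}_{int}(\rho)$, using the hyperbolic-coordinate equation \eqref{eqn - phi pm eqn} and its $\p_y$-differentiated versions. The two ingredients I expect to need are: (i) an energy estimate showing that $\mc{E}_{int}(\rho)$ grows at most logarithmically (or slower), so that $\|\p_y^k \phi_\pm(\rho)\|_{L^2_y}$ stays bounded by roughly $\mc{E}_{int}(1)\lr{\ln\rho}^C$ on the bootstrap interval; and (ii) a Sobolev/Klainerman-Sobolev type inequality in the $y$ variable, e.g. $\sup_y |\phi_\pm|^2 \lesa \|\phi_\pm\|_{L^2_y}\|\p_y\phi_\pm\|_{L^2_y}$, to convert the energy bound into a pointwise bound, giving $M(\rho)^2 \lesa \mc{E}_{int}(\rho)^2$. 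First I would set up the bootstrap assumptions $\mc{E}_{int}(\rho) \les 2 C_0 \mc{E}_{int}(1)$ and $M(\rho) \les A \mc{E}_{int}(1)$ on a maximal interval $[1,\rho_*)$ and aim to improve both constants for $\epsilon$ small.

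For the energy estimate, I would differentiate \eqref{eqn - phi pm eqn} $k$ times in $y$ ($0\les k\les 3$), pair with $\p_y^k\phi_\pm$ in $L^2_y$, and take real parts. The key structural point is that the leading term $\tfrac{i}{2\rho}|\phi_\pm|^2\phi_\pm$ is \emph{gauge/phase-like}: $\Re\langle \tfrac{i}{2\rho}|\phi_\pm|^2\phi_\pm, \phi_\pm\rangle = 0$ pointwise, so at $k=0$ it contributes nothing to the energy growth; for $k\g 1$ the terms where all derivatives fall on the conjugated factor again vanish by the same cancellation, and the remaining commutator terms are of the form $\tfrac{1}{\rho}(\text{lower-order})$, bounded using the bootstrap bound on $M(\rho)$ by $\tfrac{M(\rho)^2}{\rho}\mc{E}_{int}(\rho) \lesa \tfrac{\epsilon^2}{\rho}\mc{E}_{int}(\rho)$. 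The transport term $-\tfrac{1}{\rho}e^{\mp 2i\rho}\p_y\phi_\mp$ is a genuine loss of one derivative in $y$ at level $\rho^{-1}$: paired with $\p_y^k\phi_\pm$ it produces $\tfrac1\rho\langle \p_y^{k+1}\phi_\mp, \p_y^k\phi_\pm\rangle$, which I would control by summing over $k$ and $\pm$ and absorbing $\tfrac1\rho\|\p_y^{k+1}\phi_\mp\|_{L^2}\|\p_y^k\phi_\pm\|_{L^2}$ into a Gronwall bound of the shape $\tfrac{d}{d\rho}\mc{E}_{int}^2 \lesa \tfrac{1}{\rho}\big(\text{top-order contributions from } k=3\big) + \tfrac{\epsilon^2}{\rho}\mc{E}_{int}^2$; the $k=3$ top-order transport term is the reason the energy is allowed to grow — it forces a factor like $\rho^{C\epsilon}$ or $\lr{\ln\rho}^C$ rather than boundedness. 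The $S_\pm$ and $R_\pm$ terms are handled by their construction: $\p_\rho S_\pm$ is moved to the left-hand side (integration by parts in $\rho$, or rather the structure $\p_\rho\phi_\pm = (\cdots) + \p_\rho S_\pm$ is rewritten as $\p_\rho(\phi_\pm - S_\pm) = (\cdots)$, with $|S_\pm|\lesa \tfrac1\rho(\|\p_y\phi_\mp\|+M^2)$ small), and $R_\pm$ is genuinely $O(\rho^{-2})$ times bounded quantities after expanding the $\p_\rho$, hence integrable.

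The main obstacle is the borderline transport term $\tfrac1\rho\p_y\phi_\mp$ at the top derivative order: it is exactly critical (the $\rho^{-1}$ weight against an $L^2_y$ estimate gives a logarithmically divergent integral), so one cannot close a bounded energy estimate and must instead accept slow growth and verify that this growth is still compatible with the pointwise bootstrap. Concretely, after obtaining $\mc{E}_{int}(\rho)\lesa \mc{E}_{int}(1)\rho^{C\epsilon}$, the Klainerman–Sobolev step gives $M(\rho)^2 \lesa \mc{E}_{int}(\rho)^2 \lesa \mc{E}_{int}(1)^2 \rho^{2C\epsilon}$, which is \emph{not} yet the claimed uniform bound. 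To genuinely close it one must go back to the ODE \eqref{eqn - phi pm eqn} itself at the pointwise level: writing $\phi_\pm(\rho,y) = \phi_\pm(1,y)e^{\text{(phase)}} + (\text{error})$ using that $\Re(\bar\phi_\pm \p_\rho\phi_\pm) = \Re(\bar\phi_\pm(\p_\rho S_\pm + R_\pm))$ so that $\p_\rho|\phi_\pm|^2$ is \emph{integrable} in $\rho$ (after absorbing $\p_\rho S_\pm$), one shows $|\phi_\pm(\rho,y)|$ is bounded uniformly in $\rho$ — here the crucial cancellation is again that the cubic term is purely imaginary and hence drops out of $\p_\rho|\phi_\pm|^2$ entirely. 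Thus $M(\rho)$ itself obeys a closed estimate $M(\rho) \les M(1) + \int_1^\infty \tfrac{C}{\rho^2}(\mc{E}_{int}(\rho) + \dots)\,d\rho \lesa \mc{E}_{int}(1)$, provided the slow-growth energy bound keeps $\mc{E}_{int}(\rho)/\rho^2$ integrable, which it does since $C\epsilon < 1$. Combining the improved pointwise bound with $\epsilon$ chosen small enough to beat all implied constants recovers the bootstrap and yields $\sup_{\rho\g1}M(\rho)\lesa \mc{E}_{int}(1)$.
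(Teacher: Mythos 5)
Your overall strategy (bootstrap, a slowly growing energy, then a pointwise ODE for $|\phi_\pm|^2$ in which the cubic term drops out because it is $i$ times a real multiple of $\phi_\pm$, with $\p_\rho S_\pm$ absorbed into the left-hand side and $R_\pm$ integrable) is the paper's strategy, and your second half is essentially correct. However, there is a genuine gap in your energy estimate, precisely at the point you flag as "the main obstacle." You treat the transport term $\tfrac{1}{\rho}e^{\mp 2i\rho}\p_y\phi_\mp$ as a critical derivative loss and propose to "accept slow growth" $\mc{E}_{int}(\rho)\lesa \mc{E}_{int}(1)\rho^{C\epsilon}$ from it. This does not work for two reasons. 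First, at top order $k=3$ the pairing $\tfrac1\rho\int (\p_y^3\phi_\pm)^\dagger\p_y^{4}\phi_\mp\,dy$ involves $\|\p_y^4\phi\|_{L^2_y}$, which is not controlled by $\mc{E}_{int}$ at all, so your Gronwall inequality cannot even be written down. Second, even granting a bound of the form $\p_\rho\mc{E}_{int}^2\les \tfrac{C}{\rho}\mc{E}_{int}^2$ from this term, the constant $C$ would be an absolute constant coming from the linear part of the equation, \emph{not} small with $\epsilon$; the resulting growth $\rho^{C/2}$ would destroy the decay $|S_\pm|\lesa\rho^{-1+3\delta}$, $|R_\pm|\lesa\rho^{-2+5\delta}$ that your (and the paper's) pointwise argument requires. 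The exponent of growth must be $O(\epsilon^2)$, and it can only come from the cubic nonlinearity.

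The missing idea is that the linear part of the energy identity vanishes \emph{exactly}. In the $(U,V)$ variables (which is how the paper argues) the transport terms are $\mp\tfrac1\rho\p_y U$ and $\pm\tfrac1\rho\p_y V$, and $\Re\int(\p_y^kU)^\dagger\p_y^{k+1}U\,dy=\tfrac12\int\p_y|\p_y^kU|^2dy=0$, while the mass-coupling terms $iV$, $iU$ cancel in pairs upon taking real parts; hence $L=0$ identically and the only source of growth is the cubic term, giving $\p_\rho\mc{E}_{int}^2\lesa\tfrac1\rho M^2\mc{E}_{int}^2$ and $\mc{E}_{int}(\rho)\les\mc{E}_{int}(1)\rho^\delta$ with $\delta\sim(C^*\epsilon)^2$. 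The same cancellation is available in your $\phi_\pm$ formulation: summing the transport contributions over $\pm$ and integrating by parts in $y$, the two cross terms are complex conjugates with opposite signs after taking real parts, so they cancel exactly rather than merely being "borderline." Once you insert this cancellation, the rest of your argument (Sobolev in $y$ to bound $|S_\pm|$, $|R_\pm|$ pointwise by $\mc{E}_{int}$, integrability of $\p_\rho\big(|\phi_\pm|^2-2\Re(\phi_\pm^\dagger S_\pm)-|S_\pm|^2\big)$, and closing the bootstrap for $\epsilon$ small) goes through as in the paper.
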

\begin{proof}
Fix $T>0$. A continuity argument shows that it is enough to prove that, provided we take $\epsilon>0$ sufficiently small, there exists a constant $C^*>0$ such that
    $$ \sup_{1\les \rho \les T} M \les 2 C^* \mc{E}_{int}(1) \qquad \Longrightarrow \qquad \sup_{1 \les \rho \les T} M \les C^* \mc{E}_{int}(1).$$
If we take the derivative of the energy $\mc{E}_{int}$, we obtain
    \begin{align*}
      \frac{1}{2} \p_\rho \mc{E}_{int}^2 &=  \sum_{0 \les k \les 3} \int_{\RR} \Re\Big[  (\p_y^kU)^\dagger\Big(  - \frac{1}{\rho} \p_y^{k+1} U + i \p^k_yV\Big) + (\p_y^kV)^\dagger\Big( \frac{1}{\rho} \p_y^{k+1} V + i \p_y^k U\Big) \Big] dy  \\
        & \qquad \qquad \qquad + \frac{1}{\rho} \int_{\RR} \Re\Big[ i (\p_y^k U)^\dagger \p_y^k \big( |U|^2 V\big) + i (\p_y^k V)^\dagger \p_y^k \big( |V|^2 U)\Big] dy \\
        &= L + \frac{1}{\rho} N.
    \end{align*}
To control the linear component $L$, we simply observe that
    $$ \int_\RR \Re\big[ (\p_y^k U)^\dagger \p_y^{k+1} U + (\p_y^k V) \p_y^{k+1} V \big] dy = \int_{\RR} \p_y \big( |\p_y^k U|^2 + |\p_y^k V|^2 \big) dy = 0$$
and
    $$   (\p_y^k U)^\dagger \p^k_y V + (\p_y^k V)^\dagger \p^k_y U = 2 \Re\big[ (\p_y^k U)^\dagger \p_y^k V \big]$$
which implies that $L=0$. On the other hand, an application of H\"older together with the product inequality for Sobolev spaces  gives $ N (\rho)\lesa M^2(\rho) \mc{E}_{int}(\rho)^2$. The  assumed bound on $M(\rho)$ and $\mc{E}_{int}(1)$ then implies that
    $$ \p_\rho \mc{E}_{int}^2(\rho)  \les  \frac{C}{\rho} M^2(\rho) \mc{E}_{int}(\rho)^2 \les \frac{C (2C^* \epsilon)^2}{\rho}  \mc{E}_{int}(\rho)^2$$
for some constant $C>0$. Therefore, letting $\delta = \frac{1}{2} C (2 C^* \epsilon)^2$ denote half the constant in the above inequality, we deduce that
        \begin{equation}\label{eqn - energy est rough}  \mc{E}_{int}(\rho) \les \mc{E}_{int}(1) e^{ \frac{1}{2}  C(2 C^* \epsilon)^2 \ln \rho} = \mc{E}_{int}(1) \rho^\delta. \end{equation}
Thus the energy $\mc{E}_{int}(\rho)$ is slowly growing. This bound is not enough on its own to control the solution, and we need to use the precise structure of the nonlinear terms to deduce the bound on $M(\rho)$. More precisely, a computation using (\ref{eqn - phi pm eqn}) shows that
     \begin{equation}\label{eqn - eqn for phi^2} \p_\rho \big( |\phi_\pm|^2 - 2 \Re( \phi_\pm^\dagger S_\pm) - |S_\pm|^2 \big) = 2 \Re\Big( - \frac{i}{2 \rho} |\phi_\pm|^2 \phi^\dagger_\pm S_\pm + R^\dagger_\pm S_\pm + \phi^\dagger_\pm R_\pm \Big). \end{equation}
The definitions of $S_\pm$ and $R_\pm$ implies that
    $$ |S_\pm| \lesa \frac{1}{\rho} ( |\p_y \phi| + |\phi|^3) \lesa \frac{1}{\rho} \mc{E}_{int} ( 1 + \mc{E}_{int}^2)$$
and
      \begin{align*} |R_\pm| &\lesa \frac{1}{\rho^2} \big( |\p_y \phi| + |\phi|^2 \big) + \frac{1}{\rho} \big( |\p_y \p_\rho \phi| + |\phi|^2 |\p_\rho \phi|\big) \\
      &\lesa  \frac{1}{\rho^2} \big( |\p_y \phi| + |\phi|^2 \big) + \frac{1}{\rho^2} \big( |\p_y^2 \phi| + |\phi|^2 |\p_y \phi| + |\phi|^2 |\p_y \phi| + |\phi|^5\big) \\
      &\lesa \frac{1}{\rho^2} \mc{E}_{int} ( 1 + \mc{E}_{int})^5.
      \end{align*}
Thus an application of the bound (\ref{eqn - energy est rough}) gives
    $$ |S_\pm| \lesa \mc{E}_{int}(1)  \rho^{ -1 + 3 \delta}, \qquad |R_\pm| \lesa \mc{E}_{int}(1) \rho^{ -2 + 5 \delta}$$
(here we assumed that $\mc{E}_{int}(1) \lesa 1$, and $\rho\g 1$). Therefore, provided we assume that $ 0< \delta < \frac{1}{10}$, we may integrate the equation (\ref{eqn - eqn for phi^2}) to deduce that there exists a constant $C$ (independent of $C^*$, $\epsilon$, and $\phi$) such that
    $$ M(\rho) \les M(1) + C \mc{E}_{int}(1).$$
Consequently, assuming that $C^*> C + 1$ and choosing $\epsilon  \ll \frac{1}{C^*}$, we obtain
        $$ M(\rho) \les C^* \mc{E}_{int}(1)$$
as required.
\end{proof}
\begin{remark}\label{rem - limit of |phi|^2}
  The proof of the above lemma shows something more. Namely, that there exists functions $ a_\pm(y) \g 0$ such that
        $$ \lim_{\rho \rightarrow \infty} |\phi_\pm|^2 = a_\pm$$
  and moreover,
        $$ \big| |\phi_\pm|^2 - a_\pm \big| \lesa  \rho^{ -1 + 5 \delta} \les \rho^{ - \frac{1}{2}} $$
  (by perhaps choosing $\epsilon$ slightly smaller).
\end{remark}

\subsection{Asymptotic Behaviour}
Our goal is to determine what happens to $(U, V)$ for large $\rho$. Recall that we have the equation
    $$ \p_\rho \phi_\pm = \frac{i}{2\rho} |\phi_\pm|^2 \phi_\pm + \p_\rho S_\pm + R_\pm $$
as well as the bounds
    $$ |S_\pm| \lesa \rho^{-\frac{1}{2}}, \qquad |R_\pm| \lesa \rho^{ - \frac{3}{2}}, \qquad \big| |\phi_\pm|^2 - a_\pm \big| \lesa \rho^{-\frac{1}{2}}.$$
If we multiply the equation for $\phi_\pm$ with the integrating factor $e^{  \frac{i}{2} a_\pm \ln(\rho)}$, we deduce that
    $$ \p_\rho \big( e^{ -\frac{i}{2} a_\pm \ln(\rho)} \phi_\pm - e^{ -\frac{i}{2} a_\pm \ln(\rho)} S_\pm \big) = \frac{i}{2\rho}\big( |\phi_\pm|^2 - a_\pm\big) \phi_\pm  + \frac{i}{2 \rho} a_\pm S_\pm + e^{ - \frac{i}{2} a_\pm \ln(\rho)} R_\pm. $$
The previous bounds imply that the right hand side is integrable, and hence
        $$ \lim_{\rho \rightarrow \infty} e^{ - \frac{i}{2} a_\pm \ln(\rho)} \phi_\pm = \sigma_\pm(y) $$
exists, uniformly in $y \in \RR$. Clearly we must have $|\sigma_\pm|^2 = a_\pm$, and consequently we can write
    $$ \phi_\pm(\rho, y) = e^{ \frac{i}{2}  |\sigma_\pm(y)|^2 \ln(\rho)} \sigma_\pm(y) + \mathcal{O}(\rho^{-\frac{1}{2}}).$$
In terms of $(U, V)$, this becomes
     \begin{align*} U(\rho, y) &= \frac{1}{2} \Big( e^{  i \rho +  \frac{i}{2} |\sigma_+(y)|^2 \ln(\rho)} \sigma_+(y) + e^{  -i \rho +  \frac{i}{2} |\sigma_-(y)|^2 \ln(\rho)} \sigma_-(y)\Big) + \mathcal{O}(\rho^{-\frac{1}{2}}),\\
       V(\rho, y) &= \frac{1}{2} \Big( e^{  i \rho +  \frac{i}{2} |\sigma_+(y)|^2 \ln(\rho)} \sigma_+(y) - e^{  -i \rho +  \frac{i}{2} |\sigma_-(y)|^2 \ln(\rho)} \sigma_-(y)\Big) + \mathcal{O}(\rho^{-\frac{1}{2}}).\end{align*}
If we return back to our original functions $(u, v)$, this is
     \begin{align*} u(t, x) &=  \frac{1}{2\sqrt{t-x}} \Big( e^{  i \rho +  \frac{i}{2} |\sigma_+(y)|^2 \ln(\rho)} \sigma_+(y) + e^{  -i \rho +  \frac{i}{2} |\sigma_-(y)|^2 \ln(\rho)} \sigma_-(y)\Big) + \mathcal{O}\Big( \frac{\rho^{-\frac{1}{2}}}{\sqrt{(t-x)}}\Big), \\
       v(t, x) &=    \frac{1}{2\sqrt{t+x}} \Big( e^{  i \rho +  \frac{i}{2} |\sigma_+(y)|^2 \ln(\rho)} \sigma_+(y) - e^{  -i \rho +  \frac{i}{2} |\sigma_-(y)|^2 \ln(\rho)} \sigma_-(y)\Big) + \mathcal{O}\Big( \frac{\rho^{-\frac{1}{2}}}{\sqrt{(t+x)}}\Big).\end{align*}
Defining the functions $f_{\pm}(s) = \frac{1}{2} \sigma_{\pm} \big( \frac{1}{2} \ln( 1 + s) - \frac{1}{2} \ln(1-s) \big)$ (which implies that $ 2f_\pm(\frac{x}{t}) = \sigma_\pm(y)$) we then obtain Theorem \ref{thm - main}.\\

\noindent \textit{Acknowledgements.} The authors would like to thank A. Stingo for helpful discussions regarding the work \cite{Stingo2015}.

\bibliographystyle{amsplain}

\providecommand{\bysame}{\leavevmode\hbox to3em{\hrulefill}\thinspace}
\providecommand{\MR}{\relax\ifhmode\unskip\space\fi MR }
\providecommand{\MRhref}[2]{%
  \href{http://www.ams.org/mathscinet-getitem?mr=#1}{#2}
}
\providecommand{\href}[2]{#2}

\end{document}